\documentclass[11pt]{article}

\usepackage[a4paper,margin=0.85in]{geometry}
\usepackage[T1]{fontenc}
\usepackage{lmodern}
\usepackage{amsmath,amssymb,amsthm}

\newtheorem{theorem}{Theorem}
\usepackage{booktabs,longtable,array}
\usepackage{enumitem}
\usepackage{hyperref}
\usepackage{microtype}

\title{\textbf{Sequence Factory:}\\
Families of Integer Sequences Generated by G\"odel Implication}
\author{Volkan Yildiz\\\texttt{vo1kan@hotmail.co.uk}}
\date{}
\hypersetup{pdfkeywords={Godel implication, finite chains, Catalan enumeration, integer sequences, generating functions, root-pair factorisation, OEIS}}

\begin{document}

\maketitle

\vspace{ 3cm}

\begin{abstract}
A finite G\"odel chain generates not one integer sequence but a large
family of them. The combinatorial mechanism is developed in a companion
paper; here the sequences themselves are the primary objects.

For
\[
V_m=\{v_0<\cdots<v_{m-1}\},
\]
let \(g^{(m)}_{j,n}\) count outputs equal to \(v_j\), aggregated over
all full bracketings on \(n\) variables and all \(m^n\) valuations.
The root-pair refinement produces
\[
N^{(m)}_{i,j}(n)
=
\sum_{r=1}^{n-1}
g^{(m)}_{i,r}g^{(m)}_{j,n-r},
\qquad
N^{(m)}_{i,j}(x)=G^{(m)}_i(x)G^{(m)}_j(x).
\]
The initial coefficients of these products identify the unordered root-value pair uniquely; hence there are \emph{exactly} \(\binom{m+1}{2}\) distinct pair-count generating functions for every fixed \(m\).

We catalogue the factories for \(m=4,5,\ldots,10\), giving the first ten
terms of every base sequence and every symmetric root-pair sequence.
This range contains 49 base sequences and 210 symmetric
pair classes, for 259 indexed sequence families before further
aggregates are included. The note is intended as a reproducible source
for sequence analysis and OEIS comparison.
\end{abstract}

\vspace{10em}

\noindent\rule{\textwidth}{0.4pt}

\noindent\textbf{Keywords:} G\"odel implication; finite truth-value chains;
Catalan enumeration; integer sequences; generating functions;
root-pair factorisation; distinctness of generating functions;
OEIS.

\vspace{0.5em}
\noindent\textbf{2020 Mathematics Subject Classification:}
Primary 05A15; Secondary 03B50, 11B37, 05A10.

\clearpage
\section{Introduction}

The companion paper establishes the Catalan recurrences and the root-pair
factorisation. This note changes the emphasis: the structural theory is
the machine, while the integer sequences produced by that machine are the
objects of study.

For \(m=4,\ldots,10\), the immediate catalogue sizes are
\[
\begin{array}{c|c|c|c}
m & \text{base} & \text{symmetric pair classes} & \text{indexed total}\\
\hline
4 & 4 & 10 & 14 \\
5 & 5 & 15 & 20 \\
6 & 6 & 21 & 27 \\
7 & 7 & 28 & 35 \\
8 & 8 & 36 & 44 \\
9 & 9 & 45 & 54 \\
10 & 10 & 55 & 65 \\
\hline
\text{Total} & 49 & 210 & 259
\end{array}
\]

Thus even before upper tails, diagonals, fixed-gap sums, or higher
products are introduced, the first seven factories already produce more
than two hundred pair-convolution sequences. In addition to recording
these coefficients, Theorem~\ref{thm:distinct-pairs} proves that the
\(\binom{m+1}{2}\) pair products are distinct for each fixed chain;
swap symmetry is their only source of coincidence.

\section{The factory rule}

For \(0\le j\le m-1\), let
\[
g^{(m)}_{j,n}
\]
be the total number of occurrences of truth value \(v_j\) among all output
columns of all fully bracketed G\"odel implications on \(n\) variables.
Set
\[
G^{(m)}_j(x)=\sum_{n\ge1}g^{(m)}_{j,n}x^n.
\]

The initial conditions are
\[
g^{(m)}_{j,1}=1.
\]
For \(n\ge2\) and \(0\le j\le m-2\),
\begin{equation}
g^{(m)}_{j,n}
=
\sum_{r=1}^{n-1}
\left(
\sum_{p=j+1}^{m-1}g^{(m)}_{p,r}
\right)
g^{(m)}_{j,n-r}.
\end{equation}
The top level is
\begin{equation}
g^{(m)}_{m-1,n}
=
m^nC_{n-1}
-
\sum_{j=0}^{m-2}g^{(m)}_{j,n}.
\end{equation}

For \(0\le i,j\le m-1\), define
\begin{equation}
N^{(m)}_{i,j}(1)=0,
\qquad
N^{(m)}_{i,j}(n)
=
\sum_{r=1}^{n-1}
g^{(m)}_{i,r}g^{(m)}_{j,n-r}.
\end{equation}
Then
\[
N^{(m)}_{i,j}(x)
=
G^{(m)}_i(x)G^{(m)}_j(x)
\]
and \(N^{(m)}_{i,j}=N^{(m)}_{j,i}\).

For \(n\ge2\), the level sequences are recovered by
\[
g^{(m)}_{j,n}
=
\sum_{i=j+1}^{m-1}N^{(m)}_{j,i}(n)
\qquad (j<m-1),
\]
and
\[
g^{(m)}_{m-1,n}
=
\sum_{0\le i\le j\le m-1}N^{(m)}_{i,j}(n).
\]
At \(n=1\), each level instead contributes the initial term
\(g^{(m)}_{j,1}=1\), whereas all root-pair counts are zero.
Equivalently, for every \(m\ge2\) the valid generating-function
identities are
\[
G^{(m)}_j(x)-x=\sum_{i=j+1}^{m-1}N^{(m)}_{j,i}(x)
\qquad (j<m-1),
\]
and
\[
G^{(m)}_{m-1}(x)-x=
\sum_{0\le i\le j\le m-1}N^{(m)}_{i,j}(x).
\]

\section{Distinctness of the root-pair sequences}

The symmetry \(N^{(m)}_{i,j}(x)=N^{(m)}_{j,i}(x)\) is immediate from
commutativity of the product of generating functions. The following
theorem proves that no other coincidences arise within a fixed chain.

\begin{theorem}[Exact number of distinct pair products]\label{thm:distinct-pairs}
For every fixed integer \(m\ge2\) and indices
\(0\le i,j,k,\ell\le m-1\),
\[
G^{(m)}_i(x)G^{(m)}_j(x)=G^{(m)}_k(x)G^{(m)}_\ell(x)
\quad\Longleftrightarrow\quad
\{i,j\}=\{k,\ell\}.
\]
Consequently, the factory on \(V_m\) produces exactly
\(\binom{m+1}{2}\) distinct root-pair generating functions.
\end{theorem}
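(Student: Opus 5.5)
The plan is to compare only the first few Taylor coefficients of the products, which are computable directly from the recurrences of Section~2. The direction \(\{i,j\}=\{k,\ell\}\Rightarrow\) equality is just commutativity. For the converse, every \(G^{(m)}_j\) begins with \(x\) because \(g^{(m)}_{j,1}=1\). So I would write \(G^{(m)}_j=x(1+a_jx+b_jx^2+\cdots)\) with \(a_j=g^{(m)}_{j,2}\) and \(b_j=g^{(m)}_{j,3}\). Then the coefficient of \(x^3\) in \(G^{(m)}_iG^{(m)}_j\) is \(a_i+a_j\), and the coefficient of \(x^4\) is \(a_ia_j+b_i+b_j\).

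\emph{Step 1 (second terms).} From the recurrence, \(a_j=m-1-j\) for \(j\le m-2\). The top value is \(a_{m-1}=m^2-\sum_{j\le m-2}(m-1-j)=m(m+1)/2\). Thus \(a\) is injective. Its non-top values are \(1,\dots,m-1\), and the top value exceeds twice the largest non-top value, since \(m(m+1)/2>2(m-1)\) for all \(m\ge2\).

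\emph{Step 2 (pairs involving the top level).} Let \(t=a_{m-1}\). A pair containing \(m-1\) has \(x^3\)-coefficient at least \(t+1>2(m-1)\), so it can never coincide with a pair of two non-top indices. Among pairs containing \(m-1\), the sum \(t+a_i\) determines \(a_i\), and hence \(i\), by injectivity. The pair \(\{m-1,m-1\}\) gives \(2t\), which differs from \(t+a_i\) for \(i<m-1\). So all such pairs are separated by the \(x^3\)-coefficient alone.

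\emph{Step 3 (two non-top indices; the main step).} Here \(a_i+a_j\) fixes only \(e_1=a_i+a_j\), so the \(x^4\)-coefficient is needed to recover \(e_2=a_ia_j\). I would compute \(b_j\) for \(j\le m-2\) from the recurrence with \(n=3\). The \(r=1\) term is \(a_j\cdot a_j\), and the \(r=2\) term is \(\sum_{p>j}a_p\). Hence
\[
b_j=a_j^2+\sum_{p=j+1}^{m-2}(m-1-p)+t=a_j^2+\tbinom{a_j}{2}+t .
\]
Summing over the two indices gives
\[
a_ia_j+b_i+b_j=e_2+\tfrac32(e_1^2-2e_2)-\tfrac12e_1+2t ,
\]
which for fixed \(e_1\) equals a function of \(e_1\) alone minus \(2e_2\). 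It is therefore strictly decreasing in \(e_2\). So the \(x^3\)- and \(x^4\)-coefficients together determine \(e_1\) and \(e_2\), hence the multiset \(\{a_i,a_j\}\) as the roots of \(X^2-e_1X+e_2\), and hence \(\{i,j\}\) by injectivity of \(a\).

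The only delicate point is this last algebra: the \(b_j\) formula and the telescoped sum \(\sum_{p>j}a_p\). That is where I would double-check indices, including that the top level enters the inner sum only through \(t\). The count \(\binom{m+1}{2}\) then follows because unordered pairs from \(m\) indices, with repetition allowed, number \(\binom{m+1}{2}\).
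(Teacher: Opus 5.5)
Your proposal is correct and follows the paper's proof essentially step for step. It uses the same expansion \(G_j=x+a_jx^2+b_jx^3+O(x^4)\), the same values \(a_j=m-1-j\) and \(a_{m-1}=m(m+1)/2\), the same \(b_j=a_j^2+\binom{a_j}{2}+m(m+1)/2\) giving \([x^4]N_{i,j}=\tfrac12(3e_1^2-e_1)+m(m+1)-2e_2\) to recover \(e_1,e_2\) for non-top pairs, and the same separation of top-containing pairs by their \(x^3\)-coefficients; the only difference is that you treat the top-containing pairs first.
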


\begin{proof}
Throughout the proof, \(m\) is fixed and its superscript is omitted.
Because \(g_{j,1}=1\), every component has an expansion
\[
G_j(x)=x+a_jx^2+b_jx^3+O(x^4).
\]
For \(0\le j<m-1\), the recurrence at \(n=2\) gives
\[
a_j=m-1-j,
\]
so the non-top values of \(a_j\) are precisely \(1,2,\ldots,m-1\),
all distinct. As the total number of two-variable rows is \(m^2\),
the top coefficient is
\[
a_{m-1}=m^2-\sum_{j=0}^{m-2}a_j
       =\frac{m(m+1)}2=:M.
\]
For \(j<m-1\) put \(a=a_j\). The \(n=3\) recurrence has
contributions from splits of sizes \(1+2\) and \(2+1\), giving
\[
\begin{aligned}
b_j
&=\underbrace{a\cdot a}_{1+2}
  +\underbrace{\sum_{p=j+1}^{m-1}g_{p,2}}_{2+1}\\
&=a^2+M+\sum_{r=1}^{a-1}r
 =\frac{3a^2-a+m(m+1)}2.
\end{aligned}
\]

Now consider a product of two \emph{non-top} components
\(N_{i,j}(x)=G_i(x)G_j(x)\). Define
\(S=a_i+a_j\) and \(P=a_ia_j\). Multiplication gives
\[
[x^3]N_{i,j}(x)=S
\]
and
\[
\begin{aligned}
[x^4]N_{i,j}(x)
 &=b_i+b_j+a_ia_j\\
 &=\frac{3S^2-S+2m(m+1)-4P}{2}.
\end{aligned}
\]
Hence these two coefficients determine both \(S\) and \(P\).
The numbers \(a_i,a_j\) are therefore determined, up to order,
as the roots of \(z^2-Sz+P=0\). Since the \(a_j\) are distinct
for \(j<m-1\), the unordered pair \(\{i,j\}\) is uniquely determined.

It remains to compare pairs containing the top index \(m-1\).
Their \(x^3\)-coefficients are \(M+a_i\) when exactly one index is
top, and \(2M\) for the top--top pair. Since
\[
M+1>2(m-1),\qquad M>m-1\qquad(m\ge2),
\]
all these coefficients are strictly larger than those of any
non-top pair. They are also mutually distinct: the values
\(a_0,\ldots,a_{m-2}\) are distinct, and \(2M>M+a_i\).
Thus the only equality between pair products is the obvious
swap symmetry. Counting unordered pairs with repetition proves
the final assertion.
\end{proof}

\section{Computation and verification}

All coefficients in this catalogue were generated recursively, not by
constructing complete truth tables for large \(n\). For every
\(m=4,\ldots,10\) and \(1\le n\le10\), we checked exactly that
\[
\sum_{j=0}^{m-1}g^{(m)}_{j,n}=m^nC_{n-1}.
\]
For \(2\le n\le10\) we also checked the root-pair reconstruction
identities
\[
g^{(m)}_{j,n}=\sum_{i=j+1}^{m-1}N^{(m)}_{j,i}(n)
\quad (j<m-1),
\]
and
\[
g^{(m)}_{m-1,n}
=\sum_{0\le i\le j\le m-1}N^{(m)}_{i,j}(n).
\]
The initial values \(g^{(m)}_{j,1}=1\) and
\(N^{(m)}_{i,j}(1)=0\) were verified separately.

\section{The G\"odel--4 factory}
For \(m=4\), the factory has 4 base sequences and
\(\binom{5}{2}=10\) symmetric root-pair classes.

\begin{table}[ht]
\centering
\scriptsize
\begin{tabular}{@{}l p{0.80\textwidth}@{}}
\toprule
Sequence & Initial terms, \(n=1,\ldots,10\)\\
\midrule
$g^{(4)}_{0,n}$ & 1, 3, 22, 211, 2306, 27230, 338444, 4362627, 57788170, 781825066 \\
$g^{(4)}_{1,n}$ & 1, 2, 15, 143, 1559, 18379, 228175, 2938786, 38902987, 526061137 \\
$g^{(4)}_{2,n}$ & 1, 1, 11, 101, 1116, 13186, 164093, 2116676, 28052479, 379673121 \\
$g^{(4)}_{3,n}$ & 1, 10, 80, 825, 9355, 113237, 1431976, 18696855, 250122284, 3410617188 \\
$g^{(4)}_n$ & 4, 16, 128, 1280, 14336, 172032, 2162688, 28114944, 374865920, 5098176512 \\
\bottomrule
\end{tabular}
\caption{G\"odel--4 base sequences.}
\end{table}

The complete pair catalogue is given in Appendix~\ref{app:m4}.

\section{The G\"odel--5 factory}
For \(m=5\), the factory has 5 base sequences and
\(\binom{6}{2}=15\) symmetric root-pair classes.

\begin{table}[ht]
\centering
\scriptsize
\begin{tabular}{@{}l p{0.80\textwidth}@{}}
\toprule
Sequence & Initial terms, \(n=1,\ldots,10\)\\
\midrule
$g^{(5)}_{0,n}$ & 1, 4, 37, 445, 6089, 89963, 1398597, 22545676, 373433705, 6316990813 \\
$g^{(5)}_{1,n}$ & 1, 3, 27, 321, 4366, 64270, 996699, 16038492, 265297395, 4483081441 \\
$g^{(5)}_{2,n}$ & 1, 2, 20, 238, 3249, 47914, 743935, 11981021, 198302337, 3352542347 \\
$g^{(5)}_{3,n}$ & 1, 1, 16, 181, 2511, 37101, 577493, 9315136, 154363464, 2612110206 \\
$g^{(5)}_{4,n}$ & 1, 15, 150, 1940, 27535, 417002, 6595776, 107697800, 1801571849, 30715743943 \\
$g^{(5)}_n$ & 5, 25, 250, 3125, 43750, 656250, 10312500, 167578125, 2792968750, 47480468750 \\
\bottomrule
\end{tabular}
\caption{G\"odel--5 base sequences.}
\end{table}

The complete pair catalogue is given in Appendix~\ref{app:m5}.

\section{The G\"odel--6 factory}
For \(m=6\), the factory has 6 base sequences and
\(\binom{7}{2}=21\) symmetric root-pair classes.

\begin{table}[ht]
\centering
\scriptsize
\begin{tabular}{@{}l p{0.80\textwidth}@{}}
\toprule
Sequence & Initial terms, \(n=1,\ldots,10\)\\
\midrule
$g^{(6)}_{0,n}$ & 1, 5, 56, 811, 13340, 236766, 4420176, 85548675, 1701026756, 34539693466 \\
$g^{(6)}_{1,n}$ & 1, 4, 43, 613, 10001, 176621, 3286305, 63450868, 1259360255, 25535543671 \\
$g^{(6)}_{2,n}$ & 1, 3, 33, 471, 7690, 135862, 2528553, 48828798, 969264735, 19655246407 \\
$g^{(6)}_{3,n}$ & 1, 2, 26, 370, 6075, 107602, 2006041, 38785013, 770575551, 15636782477 \\
$g^{(6)}_{4,n}$ & 1, 1, 22, 295, 4929, 87417, 1633619, 31630264, 629126334, 12777371598 \\
$g^{(6)}_{5,n}$ & 1, 21, 252, 3920, 66829, 1215284, 23076858, 452311646, 9081751649, 185841910093 \\
$g^{(6)}_n$ & 6, 36, 432, 6480, 108864, 1959552, 36951552, 720555264, 14411105280, 293986547712 \\
\bottomrule
\end{tabular}
\caption{G\"odel--6 base sequences.}
\end{table}

The complete pair catalogue is given in Appendix~\ref{app:m6}.

\section{The G\"odel--7 factory}
For \(m=7\), the factory has 7 base sequences and
\(\binom{8}{2}=28\) symmetric root-pair classes.

\begin{table}[ht]
\centering
\scriptsize
\begin{tabular}{@{}l p{0.80\textwidth}@{}}
\toprule
Sequence & Initial terms, \(n=1,\ldots,10\)\\
\midrule
$g^{(7)}_{0,n}$ & 1, 6, 79, 1339, 25739, 533519, 11628371, 262697790, 6096273451, 144459442837 \\
$g^{(7)}_{1,n}$ & 1, 5, 63, 1049, 19976, 411682, 8937997, 201354400, 4662858223, 110310814213 \\
$g^{(7)}_{2,n}$ & 1, 4, 50, 830, 15783, 325000, 7052249, 158812295, 3676659097, 86961163899 \\
$g^{(7)}_{3,n}$ & 1, 3, 40, 667, 12723, 262443, 5701205, 128488406, 2976355674, 70428785986 \\
$g^{(7)}_{4,n}$ & 1, 2, 33, 545, 10464, 216389, 4709105, 106260638, 2463720998, 58339618919 \\
$g^{(7)}_{5,n}$ & 1, 1, 29, 449, 8779, 181679, 3962533, 89530435, 2077919862, 49242222006 \\
$g^{(7)}_{6,n}$ & 1, 28, 392, 7126, 141834, 3010546, 66716216, 1525955665, 35751870705, 853652612778 \\
$g^{(7)}_n$ & 7, 49, 686, 12005, 235298, 4941258, 108707676, 2473099629, 57705658010, 1373394660638 \\
\bottomrule
\end{tabular}
\caption{G\"odel--7 base sequences.}
\end{table}

The complete pair catalogue is given in Appendix~\ref{app:m7}.

\section{The G\"odel--8 factory}
For \(m=8\), the factory has 8 base sequences and
\(\binom{9}{2}=36\) symmetric root-pair classes.

\begin{table}[ht]
\centering
\scriptsize
\begin{tabular}{@{}l p{0.80\textwidth}@{}}
\toprule
Sequence & Initial terms, \(n=1,\ldots,10\)\\
\midrule
$g^{(8)}_{0,n}$ & 1, 7, 106, 2059, 45302, 1074182, 26774292, 691591027, 18348557438, 497043869170 \\
$g^{(8)}_{1,n}$ & 1, 6, 87, 1659, 36139, 851623, 21137859, 544343022, 14408800755, 389619444541 \\
$g^{(8)}_{2,n}$ & 1, 5, 71, 1345, 29208, 687058, 17033365, 438285224, 11594422527, 313371548957 \\
$g^{(8)}_{3,n}$ & 1, 4, 58, 1102, 23967, 564264, 13997009, 360300199, 9534215745, 257747365875 \\
$g^{(8)}_{4,n}$ & 1, 3, 48, 915, 19979, 471371, 11708925, 301694310, 7989114018, 216096161122 \\
$g^{(8)}_{5,n}$ & 1, 2, 41, 769, 16912, 399917, 9950969, 256701118, 6803625094, 184154594831 \\
$g^{(8)}_{6,n}$ & 1, 1, 37, 649, 14539, 343903, 8574901, 221460787, 5874990574, 159130751326 \\
$g^{(8)}_{7,n}$ & 1, 36, 576, 11982, 272706, 6617730, 167646744, 4383049977, 117377624889, 3203369012466 \\
$g^{(8)}_n$ & 8, 64, 1024, 20480, 458752, 11010048, 276824064, 7197425664, 191931351040, 5220532748288 \\
\bottomrule
\end{tabular}
\caption{G\"odel--8 base sequences.}
\end{table}

The complete pair catalogue is given in Appendix~\ref{app:m8}.

\section{The G\"odel--9 factory}
For \(m=9\), the factory has 9 base sequences and
\(\binom{10}{2}=45\) symmetric root-pair classes.

\begin{table}[ht]
\centering
\scriptsize
\begin{tabular}{@{}l p{0.80\textwidth}@{}}
\toprule
Sequence & Initial terms, \(n=1,\ldots,10\)\\
\midrule
$g^{(9)}_{0,n}$ & 1, 8, 137, 3001, 74381, 1985835, 55717089, 1619779872, 48361587245, 1474196113141 \\
$g^{(9)}_{1,n}$ & 1, 7, 115, 2473, 60674, 1609574, 44963535, 1303023676, 38810916587, 1180840164097 \\
$g^{(9)}_{2,n}$ & 1, 6, 96, 2046, 49981, 1322566, 36885123, 1067676561, 31773725169, 966090353251 \\
$g^{(9)}_{3,n}$ & 1, 5, 80, 1705, 41655, 1102345, 30745477, 890006204, 26487440556, 805386985958 \\
$g^{(9)}_{4,n}$ & 1, 4, 67, 1435, 35154, 931743, 26013269, 753548944, 22437977634, 682529813733 \\
$g^{(9)}_{5,n}$ & 1, 3, 57, 1221, 30041, 797945, 22309611, 646908489, 19276865922, 586707438052 \\
$g^{(9)}_{6,n}$ & 1, 2, 50, 1048, 25984, 691544, 19365842, 562165762, 16765405297, 510591807008 \\
$g^{(9)}_{7,n}$ & 1, 1, 46, 901, 22756, 605596, 16991710, 493764751, 14737764127, 449126095831 \\
$g^{(9)}_{8,n}$ & 1, 45, 810, 18975, 486060, 13273374, 378360252, 11130169050, 335359616733, 10297276986591 \\
$g^{(9)}_n$ & 9, 81, 1458, 32805, 826686, 22320522, 631351908, 18467043309, 554011299270, 16952745757662 \\
\bottomrule
\end{tabular}
\caption{G\"odel--9 base sequences.}
\end{table}

The complete pair catalogue is given in Appendix~\ref{app:m9}.

\section{The G\"odel--10 factory}
For \(m=10\), the factory has 10 base sequences and
\(\binom{11}{2}=55\) symmetric root-pair classes.

\begin{table}[ht]
\centering
\scriptsize
\begin{tabular}{@{}l p{0.80\textwidth}@{}}
\toprule
Sequence & Initial terms, \(n=1,\ldots,10\)\\
\midrule
$g^{(10)}_{0,n}$ & 1, 9, 172, 4195, 115664, 3433718, 107101112, 3460865331, 114845229880, 3890670682738 \\
$g^{(10)}_{1,n}$ & 1, 8, 147, 3521, 96101, 2834665, 88025749, 2835307552, 93856339495, 3173500495891 \\
$g^{(10)}_{2,n}$ & 1, 7, 125, 2963, 80454, 2365894, 73321505, 2358337346, 77985080287, 2634710562867 \\
$g^{(10)}_{3,n}$ & 1, 6, 106, 2506, 67971, 1997646, 61886873, 1990065881, 65795607939, 2222597639521 \\
$g^{(10)}_{4,n}$ & 1, 5, 90, 2135, 58005, 1706405, 52897847, 1701771884, 56282734406, 1901737600838 \\
$g^{(10)}_{5,n}$ & 1, 4, 77, 1835, 50014, 1473953, 45744919, 1472850124, 48740812794, 1647662981883 \\
$g^{(10)}_{6,n}$ & 1, 3, 67, 1591, 43561, 1286425, 39980471, 1288506169, 42671115032, 1443280777782 \\
$g^{(10)}_{7,n}$ & 1, 2, 60, 1388, 38314, 1133364, 35276562, 1138062282, 37717589307, 1276484845208 \\
$g^{(10)}_{8,n}$ & 1, 1, 56, 1211, 34046, 1006776, 31393110, 1013739321, 33622911257, 1138570926131 \\
$g^{(10)}_{9,n}$ & 1, 55, 1100, 28655, 815870, 24761154, 784371852, 25640494110, 858482579603, 29290783487141 \\
$g^{(10)}_n$ & 10, 100, 2000, 50000, 1400000, 42000000, 1320000000, 42900000000, 1430000000000, 48620000000000 \\
\bottomrule
\end{tabular}
\caption{G\"odel--10 base sequences.}
\end{table}

The complete pair catalogue is given in Appendix~\ref{app:m10}.

\section{Aggregate families}

Two natural pair aggregates are
\[
S^{(m)}_n
=
\sum_{0\le i<j\le m-1}N^{(m)}_{i,j}(n)
=
g^{(m)}_n-g^{(m)}_{m-1,n},
\]
and
\[
D^{(m)}_n
=
\sum_{i=0}^{m-1}N^{(m)}_{i,i}(n)
=
2g^{(m)}_{m-1,n}-m^nC_{n-1}
\]
for \(n\ge2\).

\begin{longtable}{@{}l p{0.80\textwidth}@{}}
\toprule
Sequence & Initial terms, \(n=1,\ldots,10\)\\
\midrule
\endfirsthead
\toprule
Sequence & Initial terms, \(n=1,\ldots,10\)\\
\midrule
\endhead
$S^{(4)}_n$ & 0, 6, 48, 455, 4981, 58795, 730712, 9418089, 124743636, 1687559324 \\
$D^{(4)}_n$ & 0, 4, 32, 370, 4374, 54442, 701264, 9278766, 125378648, 1723057864 \\
$S^{(5)}_n$ & 0, 10, 100, 1185, 16215, 239248, 3716724, 59880325, 991396901, 16764724807 \\
$D^{(5)}_n$ & 0, 5, 50, 755, 11320, 177754, 2879052, 47817475, 810174948, 13951019136 \\
$S^{(6)}_n$ & 0, 15, 180, 2560, 42035, 744268, 13874694, 268243618, 5329353631, 108144637619 \\
$D^{(6)}_n$ & 0, 6, 72, 1360, 24794, 471016, 9202164, 184068028, 3752398018, 77697272474 \\
$S^{(7)}_n$ & 0, 21, 294, 4879, 93464, 1930712, 41991460, 947143964, 21953787305, 519742047860 \\
$D^{(7)}_n$ & 0, 7, 98, 2247, 48370, 1079834, 24724756, 578811701, 13798083400, 333910564918 \\
$S^{(8)}_n$ & 0, 28, 448, 8498, 186046, 4392318, 109177320, 2814375687, 74553726151, 2017163735822 \\
$D^{(8)}_n$ & 0, 8, 128, 3484, 86660, 2225412, 58469424, 1568674290, 42823898738, 1186205276644 \\
$S^{(9)}_n$ & 0, 36, 648, 13830, 340626, 9047148, 252991656, 7336874259, 218651682537, 6655468771071 \\
$D^{(9)}_n$ & 0, 9, 162, 5145, 145434, 4226226, 125368596, 3793294791, 116707934196, 3641808215520 \\
$S^{(10)}_n$ & 0, 45, 900, 21345, 584130, 17238846, 535628148, 17259505890, 571517420397, 19329216512859 \\
$D^{(10)}_n$ & 0, 10, 200, 7310, 231740, 7522308, 248743704, 8380988220, 286965159206, 9961566974282 \\
\bottomrule
\end{longtable}

\section{OEIS viewpoint}

Each pair class is OEIS-ready in the sense that it has a precise offset,
a reproducible recurrence, and a product generating function. A failed
short-prefix search should not be interpreted as proof of novelty; longer
prefixes, shifts, deleted initial zeros, and standard convolution
transforms should be checked.

The global family is
\[
\mathcal F=
\left\{
N^{(m)}_{i,j}:
m\ge2,\ 0\le i\le j\le m-1
\right\}.
\]
The finite catalogue through \(m=10\) is only the first visible part of
this sequence factory.

\section{Conclusion}

Extending the catalogue from \(m=4\) through \(m=10\) yields 210
indexed symmetric pair classes and 49 base sequences. In each fixed
chain, Theorem~\ref{thm:distinct-pairs} shows that the pair classes
represent distinct generating functions, with swap symmetry as their
only possible coincidence. The combinatorial structure explains the
machine; this catalogue records what it produces.

\appendix

\section{Complete G\"odel--4 pair catalogue}
\label{app:m4}

\begin{longtable}{@{}l p{0.80\textwidth}@{}}
\toprule
Class & Initial terms, \(n=1,\ldots,10\)\\
\midrule
\endfirsthead
\toprule
Class & Initial terms, \(n=1,\ldots,10\)\\
\midrule
\endhead
$N^{(4)}_{0,0}(n)$ & 0, 1, 6, 53, 554, 6362, 77580, 986253, 12927170, 173452334 \\
$N^{(4)}_{0,1}(n)$ & 0, 1, 5, 43, 443, 5046, 61209, 775277, 10134321, 135696192 \\
$N^{(4)}_{0,2}(n)$ & 0, 1, 4, 36, 367, 4178, 50613, 640554, 8368030, 111992206 \\
$N^{(4)}_{0,3}(n)$ & 0, 1, 13, 132, 1496, 18006, 226622, 2946796, 39285819, 534136668 \\
$N^{(4)}_{1,1}(n)$ & 0, 1, 4, 34, 346, 3915, 47284, 597085, 7787516, 104093243 \\
$N^{(4)}_{1,2}(n)$ & 0, 1, 3, 28, 281, 3185, 38444, 485351, 6328829, 84580645 \\
$N^{(4)}_{1,3}(n)$ & 0, 1, 12, 115, 1278, 15194, 189731, 2453435, 32574158, 441480492 \\
$N^{(4)}_{2,2}(n)$ & 0, 1, 2, 23, 224, 2555, 30826, 389311, 5077062, 67857384 \\
$N^{(4)}_{2,3}(n)$ & 0, 1, 11, 101, 1116, 13186, 164093, 2116676, 28052479, 379673121 \\
$N^{(4)}_{3,3}(n)$ & 0, 1, 20, 260, 3250, 41610, 545574, 7306117, 99586900, 1377654903 \\
\bottomrule
\end{longtable}

\section{Complete G\"odel--5 pair catalogue}
\label{app:m5}

\begin{longtable}{@{}l p{0.80\textwidth}@{}}
\toprule
Class & Initial terms, \(n=1,\ldots,10\)\\
\midrule
\endfirsthead
\toprule
Class & Initial terms, \(n=1,\ldots,10\)\\
\midrule
\endhead
$N^{(5)}_{0,0}(n)$ & 0, 1, 8, 90, 1186, 17107, 261568, 4165509, 68356600, 1147871987 \\
$N^{(5)}_{0,1}(n)$ & 0, 1, 7, 76, 985, 14073, 213856, 3391055, 55471185, 929224925 \\
$N^{(5)}_{0,2}(n)$ & 0, 1, 6, 65, 837, 11920, 180757, 2862017, 46766696, 782765098 \\
$N^{(5)}_{0,3}(n)$ & 0, 1, 5, 57, 727, 10361, 157014, 2485333, 40601030, 679430909 \\
$N^{(5)}_{0,4}(n)$ & 0, 1, 19, 247, 3540, 53609, 846970, 13807271, 230594794, 3925569881 \\
$N^{(5)}_{1,1}(n)$ & 0, 1, 6, 63, 804, 11387, 172070, 2717823, 44330730, 740970784 \\
$N^{(5)}_{1,2}(n)$ & 0, 1, 5, 53, 673, 9511, 143509, 2264357, 36905831, 616501792 \\
$N^{(5)}_{1,3}(n)$ & 0, 1, 4, 46, 577, 8173, 123293, 1945519, 31709130, 529689571 \\
$N^{(5)}_{1,4}(n)$ & 0, 1, 18, 222, 3116, 46586, 729897, 11828616, 196682434, 3336890078 \\
$N^{(5)}_{2,2}(n)$ & 0, 1, 4, 44, 556, 7850, 118344, 1866130, 30400866, 507649223 \\
$N^{(5)}_{2,3}(n)$ & 0, 1, 3, 38, 471, 6680, 100714, 1588826, 25889409, 432390625 \\
$N^{(5)}_{2,4}(n)$ & 0, 1, 17, 200, 2778, 41234, 643221, 10392195, 172412928, 2920151722 \\
$N^{(5)}_{3,3}(n)$ & 0, 1, 2, 33, 394, 5640, 85016, 1342301, 21881472, 365572659 \\
$N^{(5)}_{3,4}(n)$ & 0, 1, 16, 181, 2511, 37101, 577493, 9315136, 154363464, 2612110206 \\
$N^{(5)}_{4,4}(n)$ & 0, 1, 30, 525, 8380, 135770, 2242054, 37725712, 645205280, 11188954483 \\
\bottomrule
\end{longtable}

\section{Complete G\"odel--6 pair catalogue}
\label{app:m6}

\begin{longtable}{@{}l p{0.80\textwidth}@{}}
\toprule
Class & Initial terms, \(n=1,\ldots,10\)\\
\midrule
\endfirsthead
\toprule
Class & Initial terms, \(n=1,\ldots,10\)\\
\midrule
\endhead
$N^{(6)}_{0,0}(n)$ & 0, 1, 10, 137, 2182, 37926, 697764, 13359813, 263454382, 5314590026 \\
$N^{(6)}_{0,1}(n)$ & 0, 1, 9, 119, 1863, 32058, 585953, 11167469, 219471717, 4415727228 \\
$N^{(6)}_{0,2}(n)$ & 0, 1, 8, 104, 1615, 27666, 504237, 9591178, 188222046, 3782831750 \\
$N^{(6)}_{0,3}(n)$ & 0, 1, 7, 92, 1423, 24343, 443229, 8424869, 165248498, 3319796736 \\
$N^{(6)}_{0,4}(n)$ & 0, 1, 6, 83, 1272, 21787, 396530, 7536395, 147804133, 2969073638 \\
$N^{(6)}_{0,5}(n)$ & 0, 1, 26, 413, 7167, 130912, 2490227, 48828764, 980280362, 20052264114 \\
$N^{(6)}_{1,1}(n)$ & 0, 1, 8, 102, 1570, 26755, 485968, 9221433, 180642808, 3625507031 \\
$N^{(6)}_{1,2}(n)$ & 0, 1, 7, 88, 1345, 22833, 413728, 7837595, 153347793, 3074848217 \\
$N^{(6)}_{1,3}(n)$ & 0, 1, 6, 77, 1173, 19900, 360373, 6824057, 133476032, 2675747158 \\
$N^{(6)}_{1,4}(n)$ & 0, 1, 5, 69, 1039, 17669, 319926, 6059017, 118518278, 2375987585 \\
$N^{(6)}_{1,5}(n)$ & 0, 1, 25, 379, 6444, 116219, 2192278, 42730199, 854018152, 17408960711 \\
$N^{(6)}_{2,2}(n)$ & 0, 1, 6, 75, 1140, 19295, 348950, 6601659, 129039786, 2585504860 \\
$N^{(6)}_{2,3}(n)$ & 0, 1, 5, 65, 985, 16675, 301525, 5703809, 111478943, 2233460884 \\
$N^{(6)}_{2,4}(n)$ & 0, 1, 4, 58, 865, 14701, 265853, 5031067, 98352306, 1970804959 \\
$N^{(6)}_{2,5}(n)$ & 0, 1, 24, 348, 5840, 104486, 1961175, 38093922, 759433486, 15450980564 \\
$N^{(6)}_{3,3}(n)$ & 0, 1, 4, 56, 844, 14306, 258744, 4895290, 95684994, 1917136391 \\
$N^{(6)}_{3,4}(n)$ & 0, 1, 3, 50, 735, 12536, 226762, 4293050, 83944497, 1682384977 \\
$N^{(6)}_{3,5}(n)$ & 0, 1, 23, 320, 5340, 95066, 1779279, 34491963, 686631054, 13954397500 \\
$N^{(6)}_{4,4}(n)$ & 0, 1, 2, 45, 634, 10932, 197672, 3745973, 73282224, 1469263503 \\
$N^{(6)}_{4,5}(n)$ & 0, 1, 22, 295, 4929, 87417, 1633619, 31630264, 629126334, 12777371598 \\
$N^{(6)}_{5,5}(n)$ & 0, 1, 42, 945, 18424, 361802, 7213066, 146243860, 3010293824, 62785270663 \\
\bottomrule
\end{longtable}

\section{Complete G\"odel--7 pair catalogue}
\label{app:m7}

\begin{longtable}{@{}l p{0.80\textwidth}@{}}
\toprule
Class & Initial terms, \(n=1,\ldots,10\)\\
\midrule
\endfirsthead
\toprule
Class & Initial terms, \(n=1,\ldots,10\)\\
\midrule
\endhead
$N^{(7)}_{0,0}(n)$ & 0, 1, 12, 194, 3626, 73787, 1587468, 35518653, 818161076, 19273463003 \\
$N^{(7)}_{0,1}(n)$ & 0, 1, 11, 172, 3161, 63681, 1360980, 30308327, 695704677, 16344502053 \\
$N^{(7)}_{0,2}(n)$ & 0, 1, 10, 153, 2785, 55808, 1188693, 26409873, 605184820, 14199378106 \\
$N^{(7)}_{0,3}(n)$ & 0, 1, 9, 137, 2483, 49641, 1055770, 23432581, 536556306, 12581928613 \\
$N^{(7)}_{0,4}(n)$ & 0, 1, 8, 124, 2240, 44758, 951412, 21108646, 483209709, 11328555017 \\
$N^{(7)}_{0,5}(n)$ & 0, 1, 7, 114, 2041, 40842, 867913, 19255680, 440768378, 10333117472 \\
$N^{(7)}_{0,6}(n)$ & 0, 1, 34, 639, 13029, 278789, 6203603, 142182683, 3334849561, 79671961576 \\
$N^{(7)}_{1,1}(n)$ & 0, 1, 10, 151, 2728, 54411, 1155298, 25610191, 585870350, 13728197480 \\
$N^{(7)}_{1,2}(n)$ & 0, 1, 9, 133, 2381, 47255, 1000241, 22125773, 505375475, 11828090200 \\
$N^{(7)}_{1,3}(n)$ & 0, 1, 8, 118, 2105, 41701, 881649, 19486735, 444834430, 10406464171 \\
$N^{(7)}_{1,4}(n)$ & 0, 1, 7, 106, 1885, 37342, 789295, 17442556, 398118226, 9312606342 \\
$N^{(7)}_{1,5}(n)$ & 0, 1, 6, 97, 1706, 33876, 715940, 15823989, 361198447, 8449421945 \\
$N^{(7)}_{1,6}(n)$ & 0, 1, 33, 595, 11899, 251508, 5550872, 126475347, 2953331645, 70314231555 \\
$N^{(7)}_{2,2}(n)$ & 0, 1, 8, 116, 2060, 40706, 859264, 18971698, 432742362, 10117644543 \\
$N^{(7)}_{2,3}(n)$ & 0, 1, 7, 102, 1807, 35664, 752234, 16599306, 378471769, 8845965289 \\
$N^{(7)}_{2,4}(n)$ & 0, 1, 6, 91, 1607, 31737, 669451, 14773299, 336845156, 7873107886 \\
$N^{(7)}_{2,5}(n)$ & 0, 1, 5, 83, 1445, 28638, 604098, 13335825, 304127198, 7109432392 \\
$N^{(7)}_{2,6}(n)$ & 0, 1, 32, 554, 10924, 228961, 5026466, 114103865, 2657214974, 63132658332 \\
$N^{(7)}_{3,3}(n)$ & 0, 1, 6, 89, 1574, 31048, 654584, 14439797, 329151964, 7691711875 \\
$N^{(7)}_{3,4}(n)$ & 0, 1, 5, 79, 1391, 27476, 579481, 12786297, 291508471, 6812835733 \\
$N^{(7)}_{3,5}(n)$ & 0, 1, 4, 72, 1243, 24676, 520485, 11490828, 262053872, 6125903529 \\
$N^{(7)}_{3,6}(n)$ & 0, 1, 31, 516, 10089, 210291, 4601239, 104211281, 2422793331, 57490046724 \\
$N^{(7)}_{4,4}(n)$ & 0, 1, 4, 70, 1222, 24197, 510604, 11271415, 257045130, 6008644784 \\
$N^{(7)}_{4,5}(n)$ & 0, 1, 3, 64, 1085, 21643, 456712, 10089253, 230178823, 5382327174 \\
$N^{(7)}_{4,6}(n)$ & 0, 1, 30, 481, 9379, 194746, 4252393, 96171385, 2233542175, 52957291745 \\
$N^{(7)}_{5,5}(n)$ & 0, 1, 2, 59, 956, 19297, 406958, 8999207, 205406860, 4804946091 \\
$N^{(7)}_{5,6}(n)$ & 0, 1, 29, 449, 8779, 181679, 3962533, 89530435, 2077919862, 49242222006 \\
$N^{(7)}_{6,6}(n)$ & 0, 1, 56, 1568, 36204, 836388, 19550580, 464000740, 11169705658, 272285957142 \\
\bottomrule
\end{longtable}

\section{Complete G\"odel--8 pair catalogue}
\label{app:m8}

\begin{longtable}{@{}l p{0.80\textwidth}@{}}
\toprule
Class & Initial terms, \(n=1,\ldots,10\)\\
\midrule
\endfirsthead
\toprule
Class & Initial terms, \(n=1,\ldots,10\)\\
\midrule
\endhead
$N^{(8)}_{0,0}(n)$ & 0, 1, 14, 261, 5602, 130666, 3219100, 82430637, 2172302362, 58531291838 \\
$N^{(8)}_{0,1}(n)$ & 0, 1, 13, 235, 4963, 114630, 2805577, 71506493, 1877836905, 50460010640 \\
$N^{(8)}_{0,2}(n)$ & 0, 1, 12, 212, 4431, 101746, 2480965, 63069818, 1653146798, 44358051118 \\
$N^{(8)}_{0,3}(n)$ & 0, 1, 11, 192, 3991, 91367, 2223657, 56454913, 1478352854, 39639151748 \\
$N^{(8)}_{0,4}(n)$ & 0, 1, 10, 175, 3628, 82951, 2017134, 51181615, 1339684841, 35909134850 \\
$N^{(8)}_{0,5}(n)$ & 0, 1, 9, 161, 3327, 76061, 1849020, 46906469, 1227589222, 32900443583 \\
$N^{(8)}_{0,6}(n)$ & 0, 1, 8, 150, 3073, 70365, 1710137, 43384295, 1135385664, 30428839031 \\
$N^{(8)}_{0,7}(n)$ & 0, 1, 43, 934, 21889, 537062, 13687802, 359087424, 9636561154, 263348238200 \\
$N^{(8)}_{1,1}(n)$ & 0, 1, 12, 210, 4362, 99755, 2425580, 61535661, 1610431956, 43159417675 \\
$N^{(8)}_{1,2}(n)$ & 0, 1, 11, 188, 3865, 87889, 2129428, 53890007, 1407820037, 37678150549 \\
$N^{(8)}_{1,3}(n)$ & 0, 1, 10, 169, 3457, 78400, 1896341, 47936353, 1251248244, 33466671322 \\
$N^{(8)}_{1,4}(n)$ & 0, 1, 9, 153, 3123, 70761, 1710522, 43220709, 1127803986, 30157664021 \\
$N^{(8)}_{1,5}(n)$ & 0, 1, 8, 140, 2848, 64550, 1560212, 39420390, 1028582893, 27503248281 \\
$N^{(8)}_{1,6}(n)$ & 0, 1, 7, 130, 2617, 59450, 1436745, 36306528, 947395098, 25333679568 \\
$N^{(8)}_{1,7}(n)$ & 0, 1, 42, 879, 20229, 490573, 12404611, 323569035, 8645950497, 235480030800 \\
$N^{(8)}_{2,2}(n)$ & 0, 1, 10, 167, 3400, 76907, 1857186, 46893871, 1223035854, 32691728408 \\
$N^{(8)}_{2,3}(n)$ & 0, 1, 9, 149, 3021, 68183, 1644241, 41477837, 1081038867, 28881104104 \\
$N^{(8)}_{2,4}(n)$ & 0, 1, 8, 134, 2713, 61205, 1475473, 37211487, 969667454, 25901997659 \\
$N^{(8)}_{2,5}(n)$ & 0, 1, 7, 122, 2461, 55566, 1339695, 33790620, 880578994, 23523212086 \\
$N^{(8)}_{2,6}(n)$ & 0, 1, 6, 113, 2250, 50964, 1228708, 31000709, 808003087, 21587160025 \\
$N^{(8)}_{2,7}(n)$ & 0, 1, 41, 827, 18763, 451140, 11345248, 294804571, 7855134125, 213478075083 \\
$N^{(8)}_{3,3}(n)$ & 0, 1, 8, 132, 2668, 60114, 1448096, 36502706, 950854362, 25392541071 \\
$N^{(8)}_{3,4}(n)$ & 0, 1, 7, 118, 2383, 53696, 1293418, 32601738, 849192089, 22676570777 \\
$N^{(8)}_{3,5}(n)$ & 0, 1, 6, 107, 2151, 48537, 1169547, 29487155, 768196868, 20516236734 \\
$N^{(8)}_{3,6}(n)$ & 0, 1, 5, 99, 1957, 44350, 1068706, 26957025, 702458302, 18764227912 \\
$N^{(8)}_{3,7}(n)$ & 0, 1, 40, 778, 17476, 417681, 10465338, 271254281, 7214368486, 195790330452 \\
$N^{(8)}_{4,4}(n)$ & 0, 1, 6, 105, 2118, 47752, 1150456, 29001285, 755455356, 20174220067 \\
$N^{(8)}_{4,5}(n)$ & 0, 1, 5, 95, 1903, 42996, 1036409, 26136937, 681026743, 18190236725 \\
$N^{(8)}_{4,6}(n)$ & 0, 1, 4, 88, 1723, 39156, 943877, 23817836, 620806352, 16586072441 \\
$N^{(8)}_{4,7}(n)$ & 0, 1, 39, 732, 16353, 389219, 9728639, 251739537, 6687280923, 181319851956 \\
$N^{(8)}_{5,5}(n)$ & 0, 1, 4, 86, 1702, 38581, 930540, 23479751, 612009962, 16351122208 \\
$N^{(8)}_{5,6}(n)$ & 0, 1, 3, 80, 1533, 35035, 844872, 21334517, 556316007, 14867886262 \\
$N^{(8)}_{5,7}(n)$ & 0, 1, 38, 689, 15379, 364882, 9106097, 235366601, 6247309087, 169286708569 \\
$N^{(8)}_{6,6}(n)$ & 0, 1, 2, 75, 1372, 31745, 764910, 19334695, 504391820, 13485214011 \\
$N^{(8)}_{6,7}(n)$ & 0, 1, 37, 649, 14539, 343903, 8574901, 221460787, 5874990574, 159130751326 \\
$N^{(8)}_{7,7}(n)$ & 0, 1, 72, 2448, 65436, 1739892, 46673556, 1269495684, 34995417066, 976419741366 \\
\bottomrule
\end{longtable}

\section{Complete G\"odel--9 pair catalogue}
\label{app:m9}

\begin{longtable}{@{}l p{0.80\textwidth}@{}}
\toprule
Class & Initial terms, \(n=1,\ldots,10\)\\
\midrule
\endfirsthead
\toprule
Class & Initial terms, \(n=1,\ldots,10\)\\
\midrule
\endhead
$N^{(9)}_{0,0}(n)$ & 0, 1, 16, 338, 8194, 215547, 5984040, 172593933, 5121586720, 155357649659 \\
$N^{(9)}_{0,1}(n)$ & 0, 1, 15, 308, 7353, 191601, 5285384, 151745687, 4487441001, 135756916197 \\
$N^{(9)}_{0,2}(n)$ & 0, 1, 14, 281, 6637, 171888, 4722933, 135225769, 3990848160, 120547182266 \\
$N^{(9)}_{0,3}(n)$ & 0, 1, 13, 257, 6031, 155641, 4266990, 121984421, 3596049662, 108529800837 \\
$N^{(9)}_{0,4}(n)$ & 0, 1, 12, 236, 5520, 142198, 3893996, 111233702, 3277236949, 98864562377 \\
$N^{(9)}_{0,5}(n)$ & 0, 1, 11, 218, 5089, 131002, 3585585, 102387320, 3015799818, 90959168576 \\
$N^{(9)}_{0,6}(n)$ & 0, 1, 10, 203, 4723, 121601, 3327639, 95010859, 2798269098, 84392047714 \\
$N^{(9)}_{0,7}(n)$ & 0, 1, 9, 191, 4407, 113648, 3109343, 88782401, 2614818491, 78859344583 \\
$N^{(9)}_{0,8}(n)$ & 0, 1, 53, 1307, 34621, 958256, 27525219, 813409713, 24581124066, 756287090591 \\
$N^{(9)}_{1,1}(n)$ & 0, 1, 14, 279, 6556, 169195, 4637374, 132531855, 3905832466, 117848064968 \\
$N^{(9)}_{1,2}(n)$ & 0, 1, 13, 253, 5881, 150855, 4118749, 117396341, 3453033519, 104031249560 \\
$N^{(9)}_{1,3}(n)$ & 0, 1, 12, 230, 5313, 135829, 3700789, 105334007, 3095063474, 93174129931 \\
$N^{(9)}_{1,4}(n)$ & 0, 1, 11, 210, 4837, 123470, 3360807, 95593924, 2807514578, 84486787238 \\
$N^{(9)}_{1,5}(n)$ & 0, 1, 10, 193, 4438, 113236, 3081204, 87620149, 2572873787, 77415055193 \\
$N^{(9)}_{1,6}(n)$ & 0, 1, 9, 179, 4101, 104690, 2848524, 81002897, 2378528446, 71566353230 \\
$N^{(9)}_{1,7}(n)$ & 0, 1, 8, 168, 3811, 97500, 2652509, 75440108, 2215320738, 66658989533 \\
$N^{(9)}_{1,8}(n)$ & 0, 1, 52, 1240, 32293, 883994, 25200953, 740636250, 22288582045, 683507599412 \\
$N^{(9)}_{2,2}(n)$ & 0, 1, 12, 228, 5244, 133730, 3637736, 103423506, 3036429522, 91351553119 \\
$N^{(9)}_{2,3}(n)$ & 0, 1, 11, 206, 4711, 119776, 3252106, 92343290, 2708655377, 81434292841 \\
$N^{(9)}_{2,4}(n)$ & 0, 1, 10, 187, 4267, 108361, 2939999, 83438635, 2446552680, 73533540238 \\
$N^{(9)}_{2,5}(n)$ & 0, 1, 9, 171, 3897, 98958, 2684538, 76181121, 2233577754, 67128174152 \\
$N^{(9)}_{2,6}(n)$ & 0, 1, 8, 158, 3586, 91145, 2472884, 70183083, 2057867497, 61850509638 \\
$N^{(9)}_{2,7}(n)$ & 0, 1, 7, 148, 3319, 84605, 2295291, 65160123, 1910843604, 57437723199 \\
$N^{(9)}_{2,8}(n)$ & 0, 1, 51, 1176, 30201, 819721, 23240305, 680370309, 20416228257, 624706113183 \\
$N^{(9)}_{3,3}(n)$ & 0, 1, 10, 185, 4210, 106760, 2894040, 82086229, 2405885928, 72288354947 \\
$N^{(9)}_{3,4}(n)$ & 0, 1, 9, 167, 3795, 96164, 2605513, 73876721, 2164712451, 65029022965 \\
$N^{(9)}_{3,5}(n)$ & 0, 1, 8, 152, 3451, 87476, 2370325, 67211268, 1969449104, 59163945929 \\
$N^{(9)}_{3,6}(n)$ & 0, 1, 7, 140, 3163, 80289, 2176209, 61722039, 1808890060, 54346931881 \\
$N^{(9)}_{3,7}(n)$ & 0, 1, 6, 131, 2916, 74301, 2013886, 57140327, 1674960667, 50331318589 \\
$N^{(9)}_{3,8}(n)$ & 0, 1, 50, 1115, 28330, 764115, 21579544, 630055849, 18869428274, 576515766594 \\
$N^{(9)}_{4,4}(n)$ & 0, 1, 8, 150, 3406, 86277, 2337008, 66250343, 1940949582, 58300030992 \\
$N^{(9)}_{4,5}(n)$ & 0, 1, 7, 136, 3085, 78203, 2118916, 60078549, 1760339219, 52879395206 \\
$N^{(9)}_{4,6}(n)$ & 0, 1, 6, 125, 2817, 71550, 1939497, 55011281, 1612253642, 48439592571 \\
$N^{(9)}_{4,7}(n)$ & 0, 1, 5, 117, 2587, 66031, 1789894, 50793777, 1489057528, 44747899780 \\
$N^{(9)}_{4,8}(n)$ & 0, 1, 49, 1057, 26665, 715959, 20164962, 587665337, 17576327245, 536462926176 \\
$N^{(9)}_{5,5}(n)$ & 0, 1, 6, 123, 2784, 70657, 1915330, 54322407, 1592000496, 47829521803 \\
$N^{(9)}_{5,6}(n)$ & 0, 1, 5, 113, 2533, 64461, 1748309, 49608721, 1454315689, 43703125955 \\
$N^{(9)}_{5,7}(n)$ & 0, 1, 4, 106, 2317, 59343, 1609373, 45695153, 1340034440, 40279596524 \\
$N^{(9)}_{5,8}(n)$ & 0, 1, 48, 1002, 25191, 674141, 18951929, 551604615, 16482515793, 502724715573 \\
$N^{(9)}_{6,6}(n)$ & 0, 1, 4, 104, 2296, 58660, 1591824, 45194564, 1325411756, 39840702322 \\
$N^{(9)}_{6,7}(n)$ & 0, 1, 3, 98, 2091, 53890, 1461894, 41537600, 1218630471, 36642316576 \\
$N^{(9)}_{6,8}(n)$ & 0, 1, 47, 950, 23893, 637654, 17903948, 520628162, 15546774826, 473949490432 \\
$N^{(9)}_{7,7}(n)$ & 0, 1, 2, 93, 1894, 49430, 1339596, 38099965, 1118234066, 33635414604 \\
$N^{(9)}_{7,8}(n)$ & 0, 1, 46, 901, 22756, 605596, 16991710, 493764751, 14737764127, 449126095831 \\
$N^{(9)}_{8,8}(n)$ & 0, 1, 90, 3645, 110850, 3335970, 101031648, 3098791989, 96261603660, 3025356923106 \\
\bottomrule
\end{longtable}

\section{Complete G\"odel--10 pair catalogue}
\label{app:m10}

\begin{longtable}{@{}l p{0.80\textwidth}@{}}
\toprule
Class & Initial terms, \(n=1,\ldots,10\)\\
\midrule
\endfirsthead
\toprule
Class & Initial terms, \(n=1,\ldots,10\)\\
\midrule
\endhead
$N^{(10)}_{0,0}(n)$ & 0, 1, 18, 425, 11486, 336422, 10392468, 333395589, 11001170630, 371015873162 \\
$N^{(10)}_{0,1}(n)$ & 0, 1, 17, 391, 10415, 302298, 9280881, 296411165, 9747929085, 327887519100 \\
$N^{(10)}_{0,2}(n)$ & 0, 1, 16, 360, 9487, 273650, 8367357, 266477562, 8745169486, 293685004678 \\
$N^{(10)}_{0,3}(n)$ & 0, 1, 15, 332, 8687, 249591, 7612789, 242033173, 7933081290, 266160702784 \\
$N^{(10)}_{0,4}(n)$ & 0, 1, 14, 307, 8000, 229339, 6985258, 221868139, 7267033293, 243684214886 \\
$N^{(10)}_{0,5}(n)$ & 0, 1, 13, 285, 7411, 212217, 6459088, 205052841, 6713792546, 225068991467 \\
$N^{(10)}_{0,6}(n)$ & 0, 1, 12, 266, 6905, 197653, 6013901, 190876787, 6248583100, 209445949659 \\
$N^{(10)}_{0,7}(n)$ & 0, 1, 11, 250, 6467, 185180, 5633672, 178797894, 5852849445, 196172758831 \\
$N^{(10)}_{0,8}(n)$ & 0, 1, 10, 237, 6082, 174436, 5305784, 168402165, 5512589508, 184769491911 \\
$N^{(10)}_{0,9}(n)$ & 0, 1, 64, 1767, 52210, 1609354, 51442382, 1690945605, 56824202127, 1943796049422 \\
$N^{(10)}_{1,1}(n)$ & 0, 1, 16, 358, 9394, 270147, 8242120, 262057273, 8589161840, 288154283159 \\
$N^{(10)}_{1,2}(n)$ & 0, 1, 15, 328, 8513, 243281, 7392584, 234389147, 7666542521, 256797886297 \\
$N^{(10)}_{1,3}(n)$ & 0, 1, 14, 301, 7757, 220828, 6694293, 211903849, 6922906360, 231681882646 \\
$N^{(10)}_{1,4}(n)$ & 0, 1, 13, 277, 7111, 202021, 6116350, 193441321, 6315763582, 211262687937 \\
$N^{(10)}_{1,5}(n)$ & 0, 1, 12, 256, 6560, 186198, 5633996, 178113822, 5813604949, 194421463457 \\
$N^{(10)}_{1,6}(n)$ & 0, 1, 11, 238, 6089, 172802, 5227665, 165245760, 5393036738, 180341991056 \\
$N^{(10)}_{1,7}(n)$ & 0, 1, 10, 223, 5683, 161381, 4882039, 154323919, 5036610018, 168423345094 \\
$N^{(10)}_{1,8}(n)$ & 0, 1, 9, 211, 5327, 151588, 4585103, 144958081, 4731209091, 158218194243 \\
$N^{(10)}_{1,9}(n)$ & 0, 1, 63, 1687, 49061, 1496566, 47493719, 1552931653, 51976666236, 1772353045161 \\
$N^{(10)}_{2,2}(n)$ & 0, 1, 14, 299, 7676, 218015, 6598894, 208658395, 6811419666, 227810393628 \\
$N^{(10)}_{2,3}(n)$ & 0, 1, 13, 273, 6961, 196995, 5949389, 187837041, 6125046679, 204685606420 \\
$N^{(10)}_{2,4}(n)$ & 0, 1, 12, 250, 6353, 179469, 5414149, 170811147, 5566870874, 185957066911 \\
$N^{(10)}_{2,5}(n)$ & 0, 1, 11, 230, 5837, 164790, 4969287, 156731484, 5106930458, 170565628678 \\
$N^{(10)}_{2,6}(n)$ & 0, 1, 10, 213, 5398, 152416, 4596004, 144954309, 4723062907, 157741321573 \\
$N^{(10)}_{2,7}(n)$ & 0, 1, 9, 199, 5021, 141910, 4279644, 134992537, 4398797246, 146919173770 \\
$N^{(10)}_{2,8}(n)$ & 0, 1, 8, 188, 4691, 132940, 4008749, 126477308, 4121795098, 137679958973 \\
$N^{(10)}_{2,9}(n)$ & 0, 1, 62, 1610, 46193, 1397374, 44104283, 1436533520, 47942577025, 1631161806542 \\
$N^{(10)}_{3,3}(n)$ & 0, 1, 12, 248, 6284, 177250, 5342216, 168435386, 5486945842, 183224282119 \\
$N^{(10)}_{3,4}(n)$ & 0, 1, 11, 226, 5711, 160856, 4843786, 152627610, 4969804897, 165900176401 \\
$N^{(10)}_{3,5}(n)$ & 0, 1, 10, 207, 5227, 147181, 4431039, 139599855, 4545052640, 151706943858 \\
$N^{(10)}_{3,6}(n)$ & 0, 1, 9, 191, 4817, 135698, 4085898, 128737401, 4191624554, 139915228612 \\
$N^{(10)}_{3,7}(n)$ & 0, 1, 8, 178, 4466, 125985, 3794324, 119576783, 3893915257, 129991414378 \\
$N^{(10)}_{3,8}(n)$ & 0, 1, 7, 168, 4159, 117725, 3545371, 111768303, 3640269324, 121540626179 \\
$N^{(10)}_{3,9}(n)$ & 0, 1, 61, 1536, 43591, 1310201, 41186455, 1337755929, 44554941267, 1513543250093 \\
$N^{(10)}_{4,4}(n)$ & 0, 1, 10, 205, 5170, 145460, 4377160, 137858869, 4487356488, 149755729487 \\
$N^{(10)}_{4,5}(n)$ & 0, 1, 9, 187, 4715, 132664, 3991993, 125723521, 4092206011, 136564267185 \\
$N^{(10)}_{4,6}(n)$ & 0, 1, 8, 172, 4331, 121956, 3670885, 115633268, 3764270024, 125632257609 \\
$N^{(10)}_{4,7}(n)$ & 0, 1, 7, 160, 4003, 112929, 3400369, 107145979, 3488711060, 116453566141 \\
$N^{(10)}_{4,8}(n)$ & 0, 1, 6, 151, 3716, 105281, 3169966, 99929147, 3254475387, 108654534929 \\
$N^{(10)}_{4,9}(n)$ & 0, 1, 60, 1465, 41240, 1233575, 38664634, 1253339969, 41683071924, 1414432974974 \\
$N^{(10)}_{5,5}(n)$ & 0, 1, 8, 170, 4286, 120637, 3630608, 114350843, 3722199742, 124219951812 \\
$N^{(10)}_{5,6}(n)$ & 0, 1, 7, 156, 3925, 110603, 3330116, 104917569, 3415829219, 114012217666 \\
$N^{(10)}_{5,7}(n)$ & 0, 1, 6, 145, 3617, 102170, 3077577, 97000841, 3158940322, 105459147991 \\
$N^{(10)}_{5,8}(n)$ & 0, 1, 5, 137, 3347, 95051, 2862934, 90283597, 2941011288, 98205684080 \\
$N^{(10)}_{5,9}(n)$ & 0, 1, 59, 1397, 39125, 1166129, 36474292, 1180648117, 39225031965, 1329985932146 \\
$N^{(10)}_{6,6}(n)$ & 0, 1, 6, 143, 3584, 101157, 3047410, 96047947, 3127887216, 104421615263 \\
$N^{(10)}_{6,7}(n)$ & 0, 1, 5, 133, 3293, 93241, 2810309, 88618981, 2886900209, 96399997615 \\
$N^{(10)}_{6,8}(n)$ & 0, 1, 4, 126, 3037, 86583, 2609133, 82327533, 2682818640, 89608714264 \\
$N^{(10)}_{6,9}(n)$ & 0, 1, 58, 1332, 37231, 1106601, 34561029, 1117559655, 37101396183, 1257272065903 \\
$N^{(10)}_{7,7}(n)$ & 0, 1, 4, 124, 3016, 85780, 2586544, 81610804, 2659594156, 88834796242 \\
$N^{(10)}_{7,8}(n)$ & 0, 1, 3, 118, 2771, 79530, 2396934, 75685800, 2467393431, 82439462896 \\
$N^{(10)}_{7,9}(n)$ & 0, 1, 57, 1270, 35543, 1053834, 32879628, 1062376482, 35250195876, 1194045382312 \\
$N^{(10)}_{8,8}(n)$ & 0, 1, 2, 113, 2534, 73650, 2217276, 70079445, 2285483186, 76386871064 \\
$N^{(10)}_{8,9}(n)$ & 0, 1, 56, 1211, 34046, 1006776, 31393110, 1013739321, 33622911257, 1138570926131 \\
$N^{(10)}_{9,9}(n)$ & 0, 1, 110, 5225, 178310, 5993790, 202309008, 6908493669, 238793940440, 8347743178346 \\
\bottomrule
\end{longtable}

\vspace{2cm }

\noindent\rule{\textwidth}{0.4pt}

\vspace{12cm}

\noindent\rule{\textwidth}{0.4pt}

\begin{verse}
\itshape
That is, you must take living so seriously\\
that even at seventy, for instance, you'll plant olive trees,\\
not because they'll be left to your children,\\
but because, although you fear death, you do not believe in it,\\
because life weighs more heavily in the balance.\\
$\;$\\
N. Hikmet
\end{verse}


\begin{thebibliography}{9}

\bibitem{YildizGodel}
V.~Yildiz,
\emph{G\"odel Implication on Finite Chains:
Catalan Enumeration and a Continuum Limit Law},
arXiv:2602.16135, 2026.

\bibitem{OEIS}
N.~J.~A.~Sloane and The OEIS Foundation Inc.,
\emph{The On-Line Encyclopedia of Integer Sequences}.

\end{thebibliography}
\end{document}